\newtheorem{theorem}{Theorem}[section]
\newtheorem{condition}{Condition}[section]
\begin{document}
		
    \title{Asymptotic solution for three-dimensional reaction-diffusion-advection equation with periodic boundary conditions}

    \author{\firstname{Aleksei}~\surname{Liubavin}}%\fnref{fn1}
        \email{liubavinaleksei@math.ecnu.edu.cn}
        \affiliation{MSU-BIT-SMBU Joint Research Center of Applied Mathematics, Shenzhen MSU-BIT University, Shenzhen 518172, People's Republic of China}
        \affiliation{School of Mathematical Sciences, Key Laboratory of MEA(Ministry of Education) \& Shanghai Key Laboratory of PMMP, East China Normal University, 200241, Shanghai, China}

    \author{\firstname{Mingkang}~\surname{Ni}}
	\email{xiaovikdo@163.com}
        \affiliation{School of Mathematical Sciences, Key Laboratory of MEA(Ministry of Education) \& Shanghai Key Laboratory of PMMP, East China Normal University, 200241, Shanghai, China}
		
    \author{\firstname{Ye}~\surname{Zhang}}
	\email{ye.zhang@smbu.edu.cn}
        \affiliation{School of Mathematics and Statistics, Beijing Institute of Technology, 100081, Beijing, China}
    
    \author{\firstname{Dmitrii}~\surname{Chaikovskii}}\thanks{Corresponding author} %\fnref{fn2}
	\email{dmitriich@smbu.edu.cn}
%	\cortext[cor1]{Corresponding author}
        \affiliation{MSU-BIT-SMBU Joint Research Center of Applied Mathematics, Shenzhen MSU-BIT University, Shenzhen 518172, People's Republic of China}

    \begin{abstract}
		%% Text of abstract
		In this study, we investigate the dynamics of moving fronts in three-dimensional spaces, which form as a result of in-situ combustion during oil production. This phenomenon is also observed in other contexts, such as various autowave models and the propagation of acoustic waves. Our analysis involves a singularly perturbed reaction-diffusion-advection type initial-boundary value problem of a general form. We employ methods from asymptotic theory to develop an approximate smooth solution with an internal layer. Using local coordinates, we focus on the transition layer, where the solution undergoes rapid changes. Once the location of the transition layer is established, we can describe the solution across the full domain of the problem. Numerical examples are provided, demonstrating the high accuracy of the asymptotic method in predicting the behaviors of moving fronts.\\
        \textbf{Keywords} Singular perturbed PDE, Quasi-linear reaction-diffusion-advection equation, Moving front 
    \end{abstract}	
\maketitle
	
%% \linenumbers

%% main text
%=============================================================================
\section{Introduction}\label{Introduction}

In this paper, we present an in-depth investigation of the phenomenon of moving fronts, with a particular focus on the scenario involving three spatial coordinates. Building upon the foundation established by recent studies of singularly perturbed equations, such as reaction-diffusion equations \cite{VolNef2006, BozNef2010}, one-dimensional reaction-diffusion-advection equations \cite{AntLevNef2014}, and their systems \cite{NefYag2015}, as well as two-dimensional reaction-diffusion-advection equations \cite{VolNefAnt2015}, we strive to expand the current understanding of this subject. The relevance of this research extends to a wide range of natural processes where moving fronts are observed, including fire spread \cite{LibTyaPei2003} and acoustic waves \cite{Rud2017, Nef2019}. Moreover, utilizing asymptotic analysis allows to predict the initial function. It is required for the formation of an autowave solution, which in turn enhances the efficiency of the dynamically adapted mesh \cite{LukVolNef2017, LukShiVol2018}.

The technique of asymptotic expansions plays a crucial role in mathematical analysis by proving the tools for existence and uniqueness of solutions in the case of singularly perturbed partial differential equations (PDEs), and it also offers a method for closely approximating these exact solutions. Moreover, the utilization of the asymptotic expansion method greatly simplifies the resolution of inverse problems, as demonstrated in the studies \cite{ChaZha2022, ChaLiuZha2023, ChaiYe2023}. This method, grounded in perturbation theory, facilitates a streamlined analysis of nonlinear systems. The versatility of asymptotic expansion is further underscored by its potential use in real-time control of oil production. In situ combustion for oil production, also known as fire flooding or underground combustion, is a technique for enhanced oil recovery. This technique involves igniting a portion of the oil reservoir and introducing air or oxygen to sustain a controlled underground combustion process. The propagation of the combustion front can be mathematically represented using the reaction-diffusion-advection equation, as detailed in the work \cite{Volk2011}. The heat generated by this combustion reduces the viscosity of the heavy oil, making it easier to extract. However, it is a complex process requiring careful control and monitoring to avoid environmental issues and ensure the safety of the operation. The integration of these mathematical approaches into operational strategies can lead to more effective and precise management of resource extraction processes.

We focus on the analysis of the general case for singularly perturbed reaction-diffusion-advection type initial-boundary value problem:
\begin{equation}\label{sec1.InitProb}
	\begin{cases}
		\displaystyle \mu\Delta u - \pdv{u}{t} = A(x,y,z)\pdv{u}{x} + B(x,y,z)\pdv{u}{y} - u \pdv{u}{z} + F(u,x,y,z), \\ 
        \displaystyle  u(x,y,z,0,\mu) = u_{init}(x,y,z,\mu), \\
		\displaystyle  u(x,y,0,t,\mu) = u^{0}(x,y), \quad u(x,y,a,t,\mu) = u^{a}(x,y), \\ 
        u(x,y,z,t,\mu) = u(x+L,y,z,t,\mu), \\
        u(x,y,z,t,\mu) = u(x,y+M,z,t,\mu), \quad x, y, z \in \bar{D}, \quad t\in[0,T].
	\end{cases}
\end{equation}
Here, $0 < \mu \leqslant \mu_0$ represents a small parameter, $\bar{D} = \{(x,y,z) : x\in\mathbb{R}, y\in\mathbb{R}, z\in[0,a]\}$, and $F(u,x,y,z) \in I_u \times \bar{D} \times [0,T]$, where $I_u$ is the region containing all quantities $u$. For this particular case, we are looking for L- and M-periodic solutions $u$ in regard to parameters $x$ and $y$ respectively. Thus, it is natural to assume that functions $A$, $B$, $F$, $u^{0}$, $u^{a}$, and $u_{init}$ are L-periodic with respect to $x$ and M-periodic with respect to $y$, and sufficiently smooth in their corresponding domains.

Employing asymptotic analysis, our objective is to establish the conditions under which a smooth solution to the problem defined in \eqref{sec1.InitProb} exists, and subsequently, to construct its asymptotic solution for efficiently approximating this solution.

This paper contains the following parts: Section \ref{SecAssump} lays out the primary assumptions and introduces the local coordinates. Then, Section \ref{section3} presents the asymptotic representation of solutions, followed by establishing coefficients for the outer functions and the inner functions of the asymptotic representation. We are able to characterize the surface $h(x,y,t)$ at the midpoint of the transition layer in Section \ref{subsecFuncH}. Finally, Section \ref{SecMainResults} concludes our research by formulating the main theorem, and Section \ref{secNumExample} presents a numerical example to illustrate practical usage.

\section{Assumptions and local coordinates}
\label{SecAssump}
The construction of an asymptotic solution requires the introduction of several necessary conditions.
\begin{condition}\label{C1}
    Assume that $u_{init}(x,y,z,\mu) = U_{n-1}(x,y,z,0) + \mathcal{O}(\mu^{n})$, $u_{init}$ $(x,y,0,\mu)=u^{0}(x,y)$, and $u_{init}(x,y,a,\mu)=u^{a}(x,y)$, where $U_{n-1}$ is the asymptotic solution of order $n-1$, which will be determined during the application of the asymptotic method (see Theorem \ref{th1}).
\end{condition}

From Condition \ref{C1}, it follows that the front should already be formed at $t=0$.

\begin{condition}\label{C2}
    For the partial differential equation
    \begin{equation}\label{sec1.DegEq}
	A(x,y,z)\pdv{u}{x} + B(x,y,z)\pdv{u}{y} - u \pdv{u}{z} + F(u,x,y,z) = 0,
    \end{equation} 
    produces a unique solution $\varphi^{(-)}(x,y,z)$ for the boundary condition $u(x,y,0) = u^{0}(x,y)$, and a solution $\varphi^{(+)}(x,y,z)$ for the boundary condition $u(x,y,a) = u^{a}(x,y)$. These solutions $\varphi^{(\mp)}$ should be sufficiently smooth within domain $\bar{D}$, which implies that they belong to the class $C^2$, and be L-periodic with $x$, and M-periodic with $y$. Moreover, they should satisfy the arrangement 
    \[
	\varphi^{(-)} < 0 < \varphi^{(+)}, \quad \text{for all } (x,y,z) \in \bar{D}.
    \]
\end{condition}

Expression \eqref{sec1.DegEq} represents a degenerate equation for the stationary case and can be obtained from the initial problem \eqref{sec1.InitProb} by setting $\mu=0$ and considering that the solution $u$ of \eqref{sec1.InitProb} is time-independent. The existence of the solutions $\varphi^{(\mp)}$ could be shown by employing the Local Existence Theorem from the book \cite{Eva2010}, which is based on the Characteristic Method (see Appendix for details).

\begin{condition}\label{C3}
	Define the correlations
	\[
		G_A^{(\mp)}(x,y,z) := \dfrac{A(x,y,z)}{\varphi^{(\mp)}(x,y,z)}, \quad          G_B^{(\mp)}(x,y,z) := \dfrac{B(x,y,z)}{\varphi^{(\mp)}(x,y,z)}.
	\]
	Assume that they satisfy the Lipschitz condition for variables $x$, $y$, and $z$ in the domain $\bar{D}$, i.e., 
        \begin{align*}
            &\abs{G_A^{(\mp)}(x_1,y_1,z_1) - G_A^{(\mp)}(x_2,y_2,z_2)} \leqslant K_A \sqrt{(x_1 - x_2)^2 + (y_1 - y_2)^2 + (z_1 - z_2)^2}, \\
            &\abs{G_B^{(\mp)}(x_1,y_1,z_1) - G_B^{(\mp)}(x_2,y_2,z_2)} \leqslant K_B \sqrt{(x_1 - x_2)^2 + (y_1 - y_2)^2 + (z_1 - z_2)^2},
        \end{align*}
       	where $K_A$ and $K_B$ are non-negative constants, and $(x_1, y_1, z_1)$ and $(x_2, y_2, z_2)$ are any arbitrary points in $\bar{D}$.
\end{condition}

This condition is important during the construction of the regular coefficients and corresponds to the characteristic equation \eqref{sec3.CharEq}. 

The main focus of this paper is centered around examining instances where the solution to the problem described in equation \eqref{sec1.InitProb} manifests as a propagating front. It should be noted that the surface $h(x,y,t)$ divides the domain $\bar{D}$ into two parts at any given point in time $t$ from the interval $[0,T]$, namely $\bar{D}^{(-)} = \{(x,y,z) : x \in \mathbb{R}, y \in \mathbb{R}, z \in [0, h(x,y,t)]\}$ and $\bar{D}^{(+)} = \{(x,y,z) : x \in \mathbb{R}, y \in \mathbb{R}, z \in [h(x,y,t), a]\}$. Based on previously mentioned conditions, we anticipate that the solution approximates $\varphi^{(-)}$ for the area $\bar{D}^{(-)}$ across all time points $t$, and closely aligns with $\varphi^{(+)}$ for the area $\bar{D}^{(+)}$. This situation results in a swift transition between them near the surface $h$. Such behavior is commonly denominated as the \emph{inner layer}.

Furthermore, our methodology assumes that it is possible to determine solution $u$ on the surface $h$, leading to the following definition:
\begin{equation}\label{sec1.halfSum}
	u = \varphi^{*}(x,y,h) := \dfrac{1}{2} \left(\varphi^{(-)}(x,y,h) + \varphi^{(+)}(x,y,h)\right).
\end{equation}

As our analysis focuses on the neighborhood around the transition point, we simplify it by introducing the local coordinates $(l,m,r)$ defined by
\begin{equation}\label{sec1.LocalCoor}
	x = l - r\alpha_x(h), \quad y = m - r\alpha_y(h), \quad z = h(l,m,t) - r\alpha_z(h).
\end{equation} 

The quantities $\alpha_x(h)$, $\alpha_y(h)$, and $\alpha_z(h)$ represent the cosines of the angles between the normal vector to the surface $h$ at any given $t$ and the coordinate axes $x$, $y$, and $z$, respectively. They are defined as follows:
\begin{align}\label{sec1.Angles}
		&\alpha_x(h) = \dfrac{h_x}{\sqrt{1+h_x^2+h_y^2}}, \quad \alpha_y(h) = \dfrac{h_y}{\sqrt{1+h_x^2+h_y^2}}, \nonumber\\ 
		&\alpha_z(h) = \dfrac{1}{\sqrt{1+h_x^2+h_y^2}}.
\end{align}

The values $l$ and $m$ correspond to the $x$ and $y$ coordinates of the point where the normal intersects the surface. Assuming the behavior of the traveling wave, we set $r > 0$ in the region $\bar{D}^{(+)}$, $r < 0$ in $\bar{D}^{(-)}$, and $r = 0$ on the $h$. Note that, derivatives $h_x$ and $h_y$ in the expressions \eqref{sec1.Angles} are evaluated at $x = l$ and $y = m$, respectively.

The next step in our preparations involves redefining the differential operator in the new coordinates $(l,m,r)$:
\begin{equation}\label{sec1.DiffOperXYZ}
	L_{\bar{D}}[u] := \mu \Delta u - \pdv{u}{t} - A(x,y,z)\pdv{u}{x} - B(x,y,z)\pdv{u}{y} + u\pdv{u}{z}.
\end{equation}

According to the work \cite{ChaiYe2023} it is possible to redefine the Laplacian operator $\Delta$ as
\begin{equation}\label{sec1.LapOPRLM}
	\begin{aligned}
		\Delta = \sum_{i \in \{x, y, z\}} (\pdv{r}{i}\pdv{r}+\pdv{l}{i}\pdv{l}+\pdv{m}{i}\pdv{m})\qty(\pdv{i})
	\end{aligned}
\end{equation}

Moreover, the time derivative in terms of local coordinates is represented as
\begin{equation}\label{sec1.tDerLoc}
	\pdv{u}{t} = \pdv{u}{t} - \pdv{u}{x}\pdv{x}{t} - \pdv{u}{y}\pdv{y}{t} - \pdv{u}{z}\pdv{z}{t},
\end{equation}
where 
\begin{equation*}
\frac{\partial}{\partial p} = \sum_{v \in \{r, l, m\}} \frac{\partial v}{\partial p} \frac{\partial}{\partial v}, \quad p \in \{x, y, z\}.
\end{equation*}

Considering the high rate of change near the inner layer region, it is only natural to introduce the stretched variable $\xi = r/\mu$. Equations \eqref{sec1.LapOPRLM} and \eqref{sec1.tDerLoc} allow us to define a differential operator for our local coordinate system
\begin{equation}\label{sec1.DiffOperLMXi}
	\begin{aligned}
		L_{h}[u] &:= \dfrac{1}{\mu} \qty(\pdv[2]{u}{\xi} + \alpha_z(A(l,m,h)h_x + B(l,m,h)h_y + h_t + u)\pdv{u}{\xi})\\
		& \quad - \pdv{u}{t} + \alpha_z^3(2h_x h_y h_{xy} - (h_x^2 + 1)h_{yy} - (h_y^2 + 1)h_{xx})\pdv{u}{\xi} \\
		& \quad + \alpha_z^2(h_x(h_t + B(l,m,h)h_y + u)- (A(l,m,h)(h_y^2 + 1)))\pdv{u}{l}\\
		& \quad + \alpha_z^2(h_y(h_t + A(l,m,h)h_x + u)- (B(l,m,h)(h_x^2 + 1)))\pdv{u}{m}\\ 
		& \quad + \sum_{i=1} \mu^i L_i[u].
	\end{aligned}
\end{equation}

The next important step for the analysis of the inner region is the consideration of the auxiliary equation, which will be derived in Section \ref{ZeroOrderInnerFunctions}:
\begin{equation}\label{sec1.AuxEq}
	\pdv[2]{\tilde{u}}{\xi} + \alpha_z(h_t + A(x,y,h)h_x + B(x,y,h)h_y + \tilde{u})\pdv{\tilde{u}}{\xi} = 0
\end{equation}
with $x, y, t$, and $h$ as parameters. This is commonly replaced with the corresponding system
\begin{equation}\label{sec1.AuxSys}
	\pdv{\tilde{u}}{\xi} = \Phi; \quad \pdv{\Phi}{\xi} = -\alpha_z(h_t + A(h)h_x + B(h)h_y + \tilde{u})\Phi.
\end{equation}

Dividing the second equation of \eqref{sec1.AuxSys} by the first one gives
\begin{equation}\label{sec1.PhiAuxUEq}
	\pdv{\Phi}{\tilde{u}} = -\alpha_z(h_t + A(h)h_x + B(h)h_y + \tilde{u}).
\end{equation}

It is noted that the points $(\varphi^{(\mp)}(h), 0)$ are stationary points of \eqref{sec1.AuxSys} on the phase plane $(\tilde{u}, \Phi)$. By integrating equation \eqref{sec1.PhiAuxUEq}, we obtain explicit expressions for the phase trajectories $\Phi^{(-)}(\tilde{u}, h)$ from the point $(\varphi^{(-)}, 0)$ as $\xi$ approaches $-\infty$, and for $\Phi^{(+)}(\tilde{u}, h)$ from the point $(\varphi^{(+)}, 0)$ as $\xi$ approaches $+\infty$:
\begin{equation}\label{sec1.PhiExp}
	\begin{aligned}
		\Phi^{(\mp)} &= \alpha_z((h_t + A(x,y,h)h_x + B(x,y,h)h_y)(\varphi^{(\mp)}(x,y,h) - \tilde{u}))\\
		& \quad + \alpha_z(\dfrac{1}{2}((\varphi^{(\mp)}(x,y,h))^2 - \tilde{u}^2)).
	\end{aligned}
\end{equation}

Thus, if we can find such $h=h_0(x,y,t)$, which satisfies
\begin{equation}\label{sec1.PhiLEqualPhiR}
	\Phi^{(-)}(\tilde{u},h_0) - \Phi^{(+)}(\tilde{u},h_0)=0,
\end{equation}
then there exists a phase trajectory on $(\tilde{u},\Phi)$ when $h=h_0$, such that it connects steady points $(\varphi^{(-)},0)$ and $(\varphi^{(+)},0)$.

Combining equations \eqref{sec1.PhiExp} and \eqref{sec1.PhiLEqualPhiR} provides us with the final condition:
\begin{condition}\label{C4}
	Assume that, there exists a surface $h=h_0(x,y,t)$, which is the solution to the problem
	\begin{equation} \label{sec1.TransFuncEq}
	  \begin{cases}
			h_t + A(x,y,h)h_x + B(x,y,h)h_y = -\dfrac{1}{2}(\varphi^{(-)}(x,y,h) + \varphi^{(+)}(x,y,h)),\\[2ex]
			h(x,y,t) = h(x+L,y,t) = h(x,y+M,t), \quad h(x,y,0)=h_{init}(x,y),
		\end{cases}
	\end{equation}
	where $h_{init}(x,y)$ is a function describing the initial position of the surface.
\end{condition}

The verification of the existence of such a solution is usually performed numerically in practice, or alternatively, we can apply Theorem \ref{characteristictheorem} from the appendix.

%=============================================================================
\section{Asymptotic representations} \label{section3}
Now we proceed with the basic asymptotic scheme. The first step is to approximate the solution of the problem \eqref{sec1.InitProb}. Due to the discontinuous nature, this approximation has the following form:
\begin{equation}\label{sec3.serU}
	U = 
	\begin{cases}
		U^{(-)} := \bar{u}^{(-)} + Q^{(-)}, \quad \text{in } \bar{D}^{(-)}, \\
		U^{(+)} := \bar{u}^{(+)} + Q^{(+)}, \quad \text{in } \bar{D}^{(+)}, 
	\end{cases}
\end{equation}
where functions $U^{(\mp)}$ and their derivatives are continuously connected on the surface $h(x,y,t)$. Here, expressions
\begin{equation}\label{sec1.regSer}
	\bar{u}^{(\mp)} = \bar{u}_0^{(\mp)}(x,y,z) + \mu\bar{u}_1^{(\mp)}(x,y,z) + \ldots.
\end{equation}
represent the position of outer functions, while functions
\begin{equation}\label{sec1.trLayerSer}
	Q^{(\mp)} = Q_0^{(\mp)}(\xi,l,m,h,t) + \mu Q_1^{(\mp)}(\xi,l,m,h,t) + \ldots\;.
\end{equation}
define the transition layer. Note that, $\xi$ is the scaling parameter introduced in the previous chapter and express $h(x,y,t)$ as
\begin{equation}\label{sec1.hSer}
	h = h_0(x,y,t) + \mu h_1(x,y,t) + \ldots.
\end{equation}

Given that we are seeking a smooth solution for all possible values of $t$, the following conditions must hold:
\begin{equation}\label{sec1.UConnect}
	U^{(-)}(l,m,h,t,\mu) = U^{(+)}(l,m,h,t,\mu) = \varphi^{*}(l,m,h),
\end{equation}
\begin{equation}\label{sec1.DerUConnect}
	\pdv{U^{(-)}(l,m,h,t,\mu)}{n} = \pdv{U^{(+)}(l,m,h,t,\mu)}{n},
\end{equation}
where $\varphi^{*}(x,y,h)$ was defined in \eqref{sec1.halfSum} and $\displaystyle\pdv{n}$ represents the derivative in the direction normal to the surface $h$.

%=============================================================================
\subsection{Outer functions}
The next step in our research involves substituting the series \eqref{sec1.regSer} into the initial problem \eqref{sec1.InitProb}
\begin{equation}\label{sec3.RegInitEq}
	\begin{aligned} 
		\mu\qty(\pdv[2]{\bar{u}^{(\mp)}}{x} + \pdv[2]{\bar{u}^{(\mp)}}{y} + \pdv[2]{\bar{u}^{(\mp)}}{z}) &= A(x,y,z)\pdv{\bar{u}^{(\mp)}}{x} + B(x,y,z)\pdv{\bar{u}^{(\mp)}}{y}\\ 
		& \quad - \bar{u}^{(\mp)} \pdv{\bar{u}^{(\mp)}}{z} + F(\bar{u}^{(\mp)},x,y,z).
	\end{aligned}
\end{equation}
Note that $\bar{u}$ does not depend on the parameter $t$, which simplifies the initial PDE \eqref{sec1.InitProb} to a stationary equation.
We now expand functions on the right-hand side of equations \eqref{sec3.RegInitEq} into Taylor series by powers of $\mu$. Equating coefficients of the same powers of $\mu$ decomposes \eqref{sec3.RegInitEq} into a set of differential equations. These help us define all elements $\bar{u}_i^{(\mp)}$ from the series \eqref{sec1.regSer}. To solve these equations, we also need to construct the appropriate boundary conditions in a similar manner.

Accordingly, the coefficients of the term $\mu^0$ yield the following expressions:
\[
	A\pdv{\bar{u}_0^{(\mp)}}{x} + B\pdv{\bar{u}_0^{(\mp)}}{y} - \bar{u}_0^{(\mp)} \pdv{\bar{u}_0^{(\mp)}}{z} + F(\bar{u}_0^{(\mp)}) = 0.
\]
These equations are equivalent to \eqref{sec1.DegEq}. According to Condition \ref{C2}, they have $L$-periodic solutions by $x$ and $M$-periodic solutions by $y$, $\varphi^{(\mp)}(x,y,z)$, with corresponding conditions
\[
	\varphi^{(-)}(x,y,0) = u^{0}(x,y), \quad \varphi^{(+)}(x,y,a) = u^{a}(x,y).
\]
Similarly, the quantities $\bar{u}_i^{(\mp)},\, i=1,2\ldots$ can be obtained from the equation
\begin{multline}\label{sec3.Regu_iEq}
    \displaystyle A(x,y,z)\pdv{\bar{u}_i^{(\mp)}}{x} + B(x,y,z)\pdv{\bar{u}_i^{(\mp)}}{y} - \varphi^{(\mp)}\pdv{\bar{u}_i^{(\mp)}}{z}  \\
		+ W^{(\mp)}(x,y,z)\bar{u}_i^{(\mp)} = \bar{f}_i^{(\mp)}(x,y,z), 
\end{multline}
with boundary conditions $\bar{u}_i^{(\mp)}(x,y,0) = 0, \quad \bar{u}_i^{(\mp)}(x,y,a) = 0,$ and periodic conditions $\bar{u}_i^{(\mp)}(x+L,y,z) = \bar{u}_i^{(\mp)}(x,y,z)$, $\bar{u}_i^{(\mp)}(x,y,z) = \bar{u}_i^{(\mp)}(x,y+M,z)$. Function $W^{(\mp)}(x,y,z)$ contains the following derivatives: 
\[
	W^{(\mp)} = -\pdv{\varphi^{(\mp)}(x,y,z)}{z} + \pdv{F(\varphi^{(\mp)},x,y,z)}{u},
\]
and  $\bar{f}_i^{(\mp)}(x,y,z)$ are some known functions, particularly, 
\[
	\bar{f}_1^{(\mp)}(x,y,z) = \pdv[2]{\varphi^{(\mp)}}{x} + \pdv[2]{\varphi^{(\mp)}}{y} + \pdv[2]{\varphi^{(\mp)}}{z}.
\]

Boundary value problems \eqref{sec3.Regu_iEq} involve linear differential equations. Therefore, we can write down the characteristic equations
\begin{equation}\label{sec3.CharEq}
	\begin{cases}
		\displaystyle\dv{x}{z} = -\dfrac{A(x,y,z)}{\varphi^{(\mp)}(x,y,z)}, \quad \dv{y}{z} = -\dfrac{B(x,y,z)}{\varphi^{(\mp)}(x,y,z)}, \\
		\qty(\bar{f}_i^{(\mp)}(x,y,z) - W^{(\mp)}(x,y,z)\bar{u}_i^{(\mp)})\dd{y} = - \varphi^{(\mp)}(x,y,z) \dd{\bar{u}_i^{(\mp)}}.
	\end{cases}
\end{equation}

According to Condition \ref{C3}, within each system in \eqref{sec3.CharEq}, there exist first integrals
\begin{equation}\label{sec3.FirstInt}
	\Psi^{(\mp)} = C_1^{(\mp)}.
\end{equation}
Accordingly, on the segment $z \in [0, a]$, there exist functions $x = X^{(\mp)}(z, C_1^{(\mp)})$ and $y = Y^{(\mp)}(z, C_1^{(\mp)})$, which are solutions to the equations in the systems \eqref{sec3.CharEq}.

Thus, solving the equations
\[
	\dv{\bar{u}_i^{(\mp)}}{z} = -\dfrac{\bar{f}_i^{(\mp)}(X^{(\mp)}, Y^{(\mp)}, z) - W^{(\mp)}(X^{(\mp)}, Y^{(\mp)}, z)\bar{u}_i^{(\mp)}}{\varphi^{(\mp)}(X^{(\mp)}, Y^{(\mp)}, z)}
\]
with initial conditions $\bar{u}_i^{(\mp)}(x, y, 0) = 0$ and $\bar{u}_i^{(\mp)}(x, y, a) = 0$ allows us to obtain expressions for $\bar{u}_i^{(\mp)}(x, y, z)$ as follows
\begin{multline*}
	\bar{u}_i^{(\mp)}(X^{(\mp)},Y^{(\mp)},z) = -\int_{0,a}^{z} \dfrac{\bar{f}_i^{(\mp)}(X^{(\mp)}\qty(z_1,C_1^{(\mp)}),Y^{(\mp)}\qty(z_1,C_1^{(\mp)}),z_1)}{\varphi^{(\mp)}(X^{(\mp)}\qty(z_1,C_1^{(\mp)}),Y^{(\mp)}\qty(z_1,C_1^{(\mp)}),z_1)}\\
	\times \exp \qty(\int_{z_1}^{z} \dfrac{W^{(\mp)}(X^{(\mp)}\qty(z_2,C_1^{(\mp)}),Y^{(\mp)}\qty(z_2,C_1^{(\mp)}),z_2)}{\varphi^{(\mp)}(X^{(\mp)}\qty(z_2,C_1^{(\mp)}),Y^{(\mp)}\qty(z_2,C_1^{(\mp)}),z_2)}\dd{z_2}) \dd{z_1}.
\end{multline*}

%=============================================================================
\subsection{Inner layer functions}
Now we shift our attention to the inner layer area. Substituting \eqref{sec3.serU} into \eqref{sec1.InitProb} and splitting it by the powers of $\mu$ provides us with equations
\begin{equation}\label{sec4.TranLayerEq}
	\begin{aligned}
		\dfrac{1}{\mu} &\qty(\pdv[2]{Q^{(\mp)}}{\xi} + \alpha_z((h_t + A(h)h_x + B(h)h_y + \bar{u}^{(\mp)} + Q^{(\mp)}))\pdv{Q^{(\mp)}}{\xi})\\
		&- \pdv{Q^{(\mp)}}{t} + \alpha_z^3(2h_x h_y h_{xy} - (h_x^2 + 1)h_{yy} - (h_y^2 + 1)h_{xx})\pdv{Q^{(\mp)}}{\xi}\\
		&+ \alpha_z^2(h_x(h_t + B(h)h_y + \bar{u}^{(\mp)} + Q^{(\mp)}) - A(h)(h_y^2 + 1))\pdv{Q^{(\mp)}}{l}\\
		&+ \alpha_z^2(h_y(h_t + A(h)h_x + \bar{u}^{(\mp)} + Q^{(\mp)}) - B(h)(h_x^2 + 1))\pdv{Q^{(\mp)}}{m}\\
		&+ \sum_{i=1} \mu^i L_i[Q^{(\mp)}] = -Q^{(\mp)}\pdv{\bar{u}^{(\mp)}}{z} + \hat{F}^{(\mp)}(\xi,l,m,t,\mu). 
	\end{aligned} 
\end{equation}
Here we denote $A(h) := A(l,m,h)$, $B(h) := B(l,m,h)$, 
\begin{align*}
	\hat{F}^{(\mp)} &:= F(\bar{u}^{(\mp)}+Q^{(\mp)}, l - r\cos(\alpha_x), m - r\cos(\alpha_y)) \\ 
	& \quad -F(\bar{u}^{(\mp)}, l - r\cos(\alpha_x), m - r\cos(\alpha_y)),
\end{align*}
and 
\[
	\bar{u}^{(\mp)} = \bar{u}^{(\mp)}(l - r\cos(\alpha_x), m - r\cos(\alpha_y), h(l,m,t) - r\cos(\alpha_z)).
\]
Functions $\cos(\alpha_x),\, \cos(\alpha_y)$ and $\cos(\alpha_z)$ are defined in \eqref{sec1.Angles}.

According to the general scheme, substituting \eqref{sec1.regSer} and \eqref{sec1.trLayerSer} into equations \eqref{sec4.TranLayerEq}, and expanding functions in the right part of \eqref{sec4.TranLayerEq} into Taylor's series with respect to $\mu$, allow us to obtain a set of equations for each individual $Q_i^{(\mp)}(\xi,l,m,h,t)$, $i=0,1,\ldots$, from \eqref{sec1.trLayerSer}.

Due to the nature of the inner layer functions, we need to introduce additional conditions about their behavior at infinities
\begin{equation}\label{sec4.InfCond}
	Q_i^{(\mp)}(\mp\infty,l,m,h,t) = 0
\end{equation}
and also to ensure that the equalities from \eqref{sec1.UConnect} are satisfied. Therefore, we rewrite \eqref{sec1.UConnect} with respect to \eqref{sec3.serU}:
\begin{multline}\label{sec4.ConnCondU}
    \varphi^{*}(l,m,h) = \bar{u}_0^{(-)}(l,m,h) + \mu\bar{u}_1^{(-)}(l,m,h) + \cdots \\
    + Q_0^{(-)}(0,l,m,h,t) + \mu Q_1^{(-)}(0,l,m,h,t) + \cdots \\ 
    = \bar{u}_0^{(+)}(l,m,h) + \mu\bar{u}_1^{(+)}(l,m,h) + \cdots \\
    + Q_0^{(+)}(0,l,m,h,t) + \mu Q_1^{(+)}(0,l,m,h,t) + \cdots\;.
\end{multline}

%=============================================================================
\subsubsection{Zero-order inner layer functions} 
\label{ZeroOrderInnerFunctions}
If we take $\mu^{-1}$ coefficients from \eqref{sec4.TranLayerEq}, $\mu^{0}$ coefficients from \eqref{sec4.ConnCondU}, and condition \eqref{sec4.InfCond}, then we can formulate the following problem
\begin{equation}\label{sec4.TrLayerQ0}
	\begin{cases}
		\displaystyle\pdv[2]{Q_0^{(\mp)}}{\xi} + \alpha_z((h_t + A(h)h_x + B(h)h_y + \varphi^{(\mp)} + Q_0^{(\mp)}))\pdv{Q_0^{(\mp)}}{\xi} = 0, \\[2ex]
		\varphi^{(\mp)}(l,m,h) + Q_0^{(\mp)}(0,l,m,h,t) = \varphi^{*}(l,m,h), \quad Q_0^{(\mp)}(\mp\infty,l,m,h,t) = 0.
	\end{cases}
\end{equation}

For simplicity, we introduce the following notation:
\begin{equation}\label{sec4.tildUReplace}
	\tilde{u}(\xi,h) = 
	\begin{cases}
		\varphi^{(-)}(l,m,h) + Q_0^{(-)}(\xi,l,m,h,t), \quad \xi \leqslant 0, \\
		\varphi^{(+)}(l,m,h) + Q_0^{(+)}(\xi,l,m,h,t), \quad \xi \geqslant 0,
	\end{cases}
\end{equation}
Function $F(\tilde{u}(\xi,h),l,m,h)$ we will write as $\tilde{F}(\xi,l,m,t)$

Clearly, by inserting the substitution \eqref{sec4.tildUReplace} into \eqref{sec4.TrLayerQ0},  we obtain equations that are identical to those in \eqref{sec1.AuxEq}. As we have already shown,  there exist derivatives 
\begin{equation}\label{sec4.DerTildU}
	\begin{aligned}
		&\Phi^{(-)}(\xi,h) := \Phi^{(-)}(\tilde{u}(\xi,h),h) = \pdv{\tilde{u}}{\xi}, \quad \xi\leqslant 0, \\
		&\Phi^{(+)}(\xi,h) := \Phi^{(+)}(\tilde{u}(\xi,h),h) = \pdv{\tilde{u}}{\xi}, \quad \xi\geqslant 0,
	\end{aligned}
\end{equation}
which satisfy equations \eqref{sec1.PhiExp}. 

For simplicity, we introduce the following replacement:
\[
	P^{(\mp)}(h) := h_t + A(h)h_x + B(h)h_y + \varphi^{(\mp)}(x,h).
\]
For any point $(x,y,t)\in \mathbb{R}\times [0,T]$ and function $h(x,y,t)$, satisfying the inequality $P^{(-)}(h)>0$, there exists a solution $Q_0^{(-)}$ for \eqref{sec4.TrLayerQ0} when $\xi\leqslant 0$. Similarly, if it is inequality $P^{(+)}(h)<0$, then there exists a solution $Q_0^{(+)}$ for \eqref{sec4.TrLayerQ0} when $\xi\geqslant 0$.

Thus, it is possible to define  
\begin{equation}\label{sec4.Q0Expres}
	Q_0^{(+)} = \dfrac{2\tilde{P}^{(\mp)}P^{(\mp)}}{e^{\alpha_zP^{(\mp)}\xi} - \tilde{P}^{(\mp)}} .
\end{equation}
Here 
\[
	\tilde{P}^{(\mp)} = \dfrac{\varphi^{*}(x,h) - \varphi^{(\mp)}(x,h)}{2P^{(\mp)} - (\varphi^{*}(x,h) - \varphi^{(\mp)}(x,h))}
\]
and $\varphi^{*}(x,h)$ was specified in the form \eqref{sec1.halfSum}.

%=============================================================================
\subsubsection{First order and higher inner layer functions}
If we equate elements to the $\mu^0$ power in \eqref{sec4.TranLayerEq}, then we obtain 
\begin{multline}\label{sec4.TrLayerQ1}
    \displaystyle\pdv[2]{Q_1^{(\mp)}}{\xi} + \alpha_z(h_t + A(h)h_x + B(h)h_y + \tilde{u}(\xi,h))\pdv{Q_1^{(\mp)}}{\xi}  \\
    + \alpha_z\Phi^{(\mp)}(\xi,h)Q_1^{(\mp)} = f_1^{(\mp)}(\xi,l,m,t),	
\end{multline}
where 
\begin{align*}
	f_1^{(\mp)} &= \alpha_z^3((h_x^2+1)h_{yy} + (h_y^2+1)h_{xx} - 2h_x h_y h_{xy})\Phi^{(\mp)} \\
	& \quad + \xi\alpha_z^2\qty[\pdv{A}{x}h_x^2 + h_x h_y\qty(\pdv{A}{y} + \pdv{B}{x}) + \pdv{B}{y} - h_x\pdv{A}{z} - h_y\pdv{B}{z}]\Phi^{(\mp)} \\
	& \quad + \xi\alpha_z^2\qty[\pdv{\varphi}{x}h_x + \pdv{\varphi}{y}h_y - \pdv{\varphi}{z}]\Phi^{(\mp)} - \alpha_z\bar{u}_1^{(\mp)}\Phi^{(\mp)} \\
	& \quad + \alpha_z^2\qty[(h_y^2+1)A(h) - h_x\qty(h_t + h_y B(h) + \tilde{u}_0^{(\mp)})]\pdv{Q_0^{(\mp)}}{l} \\
	& \quad + \alpha_z^2\qty[(h_x^2+1)B(h) - h_y\qty(h_t + h_x A(h) + \tilde{u}_0^{(\mp)})]\pdv{Q_0^{(\mp)}}{m} \\
	& \quad + \pdv{Q_0^{(\mp)}}{t} + A(h)\pdv{\varphi}{x} + B(h)\pdv{\varphi}{y} - \tilde{u}_0^{(\mp)}\pdv{\varphi}{z} \\
	& \quad +  \tilde{F}(\xi,l,m,t) - F(\varphi^{(\mp)},l,m,h).
\end{align*}
It is solved for the next boundary conditions $\bar{u}_1^{(\mp)}(l,m,h) + Q_1^{(\mp)}(0,l,m,h,t) = 0$ and $Q_1^{(\mp)}(\mp\infty,l,m,h,t) = 0$.
The solution for such a problem can be written as
\begin{align*}
	Q_1^{(\mp)}(\xi,l,m,h,t) &= -\bar{u}_1^{(\mp)}(l,m,h) \dfrac{\Phi^{(\mp)}(\xi,h)}{\Phi^{(\mp)}(0,h)} \\
	& \quad + \Phi^{(\mp)}(\xi,h) \int_{0}^{\xi} \dfrac{1}{\Phi^{(\mp)}(s_1,h)}\int_{\mp\infty}^{s_1} f_1^{(\mp)}(s_2,l,m,t) \dd{s_2}  \dd{s_1}.
\end{align*}

The rest of the elements with the indexes $k = 2,3\ldots$ can be derived from the problems
\begin{multline*}
	 \pdv[2]{Q_k^{(\mp)}}{\xi} + \alpha_z(h_t + A(h)h_x + B(h)h_y + \tilde{u}(\xi,h))\pdv{Q_k^{(\mp)}}{\xi} \\ 
    + \alpha_z\Phi^{(\mp)}(\xi,h)Q_k^{(\mp)} = f_k^{(\mp)}(\xi,l,m,t),
\end{multline*}
in a similar manner as for $k=1$.

%==========================================================================
\subsection{Asymptotic representation of the function $h$}\label{subsecFuncH}
To define all the coefficients $h_i(l,m,t),\, i=0,1,\ldots$, we use \eqref{sec1.hSer} and \eqref{sec1.DerUConnect}. However, we firstly need to define the derivative $\pdv{n}$ in the direction normal to the surface $h(x,y,t)$:
\[
	\pdv{n} = (n, \bigtriangledown) = \pdv{r} = -\alpha_x\pdv{x} - \alpha_y\pdv{y} + \alpha_z\pdv{z},
\]
where $\alpha_x,\, \alpha_y$, and $\alpha_z$ are defined in \eqref{sec1.Angles}. In terms of the stretched variable $\xi$, this derivative transforms to 
\[
	\pdv{n} = \dfrac{1}{\mu}\pdv{\xi}.
\]
Now, by combining all the above with asymptotic representations \eqref{sec1.regSer} and \eqref{sec1.trLayerSer}, we can rewrite the matching condition \eqref{sec1.DerUConnect} in the form:
\begin{equation}\label{sec5.matchCond}
	\begin{aligned} 
		&-\alpha_x\pdv{\varphi^{(-)}}{x}\qty(l,m,h) - \alpha_y\pdv{\varphi^{(-)}}{y}\qty(l,m,h) + \alpha_z\pdv{\varphi^{(-)}}{z}\qty(l,m,h) \\
		& \qquad -\mu \alpha_x\pdv{\bar{u}_1^{(-)}}{x}\qty(l,m,h) - \mu \alpha_y\pdv{\bar{u}_1^{(-)}}{y}\qty(l,m,h) + \mu \alpha_z\pdv{\bar{u}_1^{(-)}}{z}\qty(l,m,h) + \cdots \\
		& \qquad + \dfrac{1}{\mu}\pdv{Q_0^{(-)}}{\xi}\qty(0,l,m,h,t) + \pdv{Q_1^{(-)}}{\xi}\qty(0,l,m,h,t) + \cdots \\
		&= -\alpha_x\pdv{\varphi^{(+)}}{x}\qty(l,m,h) - \alpha_y\pdv{\varphi^{(+)}}{y}\qty(l,m,h) + \alpha_z\pdv{\varphi^{(+)}}{z}\qty(l,m,h) \\
		& \qquad -\mu \alpha_x\pdv{\bar{u}_1^{(+)}}{x}\qty(l,m,h) - \mu \alpha_y\pdv{\bar{u}_1^{(+)}}{y}\qty(l,m,h) + \mu \alpha_z\pdv{\bar{u}_1^{(+)}}{z}\qty(l,m,h) + \cdots\\
		& \qquad  + \dfrac{1}{\mu}\pdv{Q_0^{(+)}}{\xi}\qty(0,l,m,h,t) + \pdv{Q_1^{(+)}}{\xi}\qty(0,l,m,h,t) + \cdots\;.
	\end{aligned}
\end{equation}

For the sake of simplicity, we introduce a new function
\[
	H(l,m,h,t,\mu) := \mu\pdv{U^{(-)}}{n}\qty(l,m,h,t,\mu) - \mu\pdv{U^{(+)}}{n}\qty(l,m,h,t,\mu).
\]
Therefore, instead of \eqref{sec1.DerUConnect}, we write the following equation 
\[
	H(l,m,h,t,\mu) = H_0(l,m,h,t) + \mu H_1(l,m,h,t) +  \ldots = 0,
\]
where 
\begin{align} 
	H_0 &= \pdv{Q_0^{(-)}}{\xi}\qty(0,l,m,h,t) - \pdv{Q_0^{(+)}}{\xi}\qty(0,l,m,h,t), \label{equationH0} \\
	H_1 &= \pdv{Q_1^{(-)}}{\xi}\qty(0,l,m,h,t)  - \pdv{Q_1^{(+)}}{\xi}\qty(0,l,m,h,t) \notag  \\
	&- \alpha_x\pdv{\varphi^{(-)}}{x}\qty(l,m,h)- \alpha_y\pdv{\varphi^{(-)}}{y}\qty(l,m,h) + \alpha_z\pdv{\varphi^{(-)}}{z}\qty(l,m,h) \notag   \\
	&+ \alpha_x\pdv{\varphi^{(+)}}{x}\qty(l,m,h) +\alpha_y\pdv{\varphi^{(+)}}{y}\qty(l,m,h) - \alpha_z\pdv{\varphi^{(+)}}{z}\qty(l,m,h). \label{equationH1}
\end{align}

As a result, the smooth matching condition is obtained by equating coefficients at $\mu^0$ and has the following representation: 
\[
	H_0(l,m,h(l,m,t),t) = \Phi^{(-)}(\varphi^{*}(l,m,t),h) - \Phi^{(+)}(\varphi^{*}(l,m,t),h) = 0.
\]

Thus, due to the fact that previous equation has the same form as in \eqref{sec1.PhiLEqualPhiR}, we can use Condition \eqref{C4} to define $h_0(x,y,t)$ from \eqref{sec1.hSer}.

The smooth matching condition of order $\mu^1$ can be written as:
\begin{equation}\label{sec5.matchC2}
	\begin{aligned} 
		\alpha_z \pdv{h_1}{t} &+ \pdv{H_0}{h_x}\qty(x, y, h_0, t)\pdv{h_1}{x} + \pdv{H_0}{h_y}\qty(x, y, h_0, t)\pdv{h_1}{y} \\
		&+ \pdv{H_0}{h}\qty(x, y, h_0, t)h_1 + H_1(l,m,h_0,t) = 0.
	\end{aligned}
\end{equation}
The functions $H_0$ and $H_1$ are defined in \eqref{equationH0} and \eqref{equationH1}, respectively. We can define $h_1(x,y,t)$ as a solution of \eqref{sec5.matchC2} with boundary conditions
\[
h_1(x+L,y,t) = h_1(x,y,t) = h_1(x,y+M,t), \quad h_1(x,y,0) = 0.
\]

This problem is solvable due to the positive coefficient in front of $\displaystyle\pdv{h_1}{t}$.  We can use the same approach to find coefficients $h_k(x,y,t),\, k=2,3,\ldots$ from the problems:
\begin{align*} 
	\alpha_z \pdv{h_k}{t} &+ \pdv{H_0}{h_x}\qty(x, y, h_0, t)\pdv{h_k}{x} + \pdv{H_0}{h_y}\qty(x, y, h_0, t)\pdv{h_k}{y} \\
	&+ \pdv{H_0}{h}\qty(x, y, h_0, t)h_k + H_k(l,m,h_0,t) = 0, 
\end{align*}
with conditions
\[
h_k(x + L,y,t) = h_k(x,y,t) = h_k(x,y + M,t), \quad h_k(x,y,0) = 0.
\]

%=============================================================================
\section{Main results}\label{SecMainResults}
According to the asymptotic analysis in Section \ref{section3}, we can now formulate the main theorem of this article as follows:

\begin{theorem}\label{th1}
	If conditions \eqref{C1}, \eqref{C2}, \eqref{C3}, and \eqref{C4} are met, then the system \eqref{sec1.InitProb} has a solution $u(x,y,z,t,\mu)$, which is $L$-periodic in the variable $x$ and $M$-periodic in the variable $y$. Additionally, this solution belongs to the class
    \[
    C(\bar{D}\times[0,T])\cap C^{2,2,1,1}(\bar{D}\times[0,T])\cap C^{2,2,2,1}((\bar{D}^{(-)}\cup \bar{D}^{(+)})\times[0,T]),
    \]
    such that it lies between the upper and lower solutions for $t\in [0,T]$, and the function $U_n(x,y,z,t,\mu)$ is its uniform asymptotic approximation in the domain $\bar{D}\times [0,T]$ with the accuracy $O(\mu^{n+1})$. In other words, in the area $\bar{D}\times [0,T]$, we have the following estimate:
	\[
		\abs{u(x,y,z,t,\mu) - U_n(x,y,z,t,\mu)} \leqslant C\mu^{n+1},
	\] 
	where $C$ is some positive constant. The function $U_n$ has the following representation:
\begin{equation}\label{sec7.finSol}
    U_n= 
    \begin{cases}
        \bar{u}^{(-)}(x,y,z,\mu) + Q^{(-)}(\xi,l,m,h(l,m,t),t,\mu), \\
        \bar{u}^{(+)}(x,y,z,\mu) + Q^{(+)}(\xi,l,m,h(l,m,t),t,\mu).
    \end{cases}
\end{equation}
 Here,  $\bar{u}^{(\mp)}$ and $Q^{(\mp)}$ are defined in \eqref{sec1.regSer} and \eqref{sec1.trLayerSer} respectively.

\end{theorem}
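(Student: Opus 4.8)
The proof follows the \emph{asymptotic method of differential inequalities}, adapting the ordered upper- and lower-solution technique of Nefedov and collaborators to the present moving-front setting; the appearance of ``upper and lower solutions'' already in the statement signals this route. The starting observation is that, by the formal construction of Section \ref{section3}, the partial sum $U_n$ defined in \eqref{sec7.finSol} satisfies the equation of \eqref{sec1.InitProb} up to a residual of order $\mu^{n+1}$ in each of the subdomains $\bar{D}^{(-)}$ and $\bar{D}^{(+)}$, reproduces the boundary and initial data to the same order through Condition \ref{C1}, and is matched in value and normal derivative across the front $h$ to order $\mu^{n+1}$ by the conditions \eqref{sec1.UConnect}--\eqref{sec1.DerUConnect}. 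The plan is to upgrade this formal approximation to a genuine solution by trapping it between an ordered pair of barrier functions and invoking the comparison principle for the quasilinear parabolic problem.

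First I would construct the lower and upper solutions $\alpha$ and $\beta$. The idea is to carry the asymptotics one order higher, to $U_{n+1}$, and to perturb simultaneously its amplitude and the location of its internal layer:
\[
\beta = U_{n+1}\big|_{h\to h+\mu^{n+1}\delta} + \mu^{n+1}\gamma(x,y,z,t), \qquad \alpha = U_{n+1}\big|_{h\to h-\mu^{n+1}\delta} - \mu^{n+1}\gamma(x,y,z,t),
\]
where $\gamma>0$ is a smooth barrier (built from the solution of a linear problem whose right-hand side is controlled by $W^{(\mp)}$ and by $\partial F/\partial u$) and $\delta(x,y,t)>0$ is a shift of the transition surface chosen to steer the jump of the normal derivative at the displaced front. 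Each branch is glued across its own shifted front exactly as in \eqref{sec4.tildUReplace}, so that $\alpha$ and $\beta$ are continuous and piecewise $C^2$, and inherit $L$- and $M$-periodicity from the periodicity of all data.

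Next I would verify the defining inequalities. In each smooth subdomain the operator applied to $\beta$ must be nonpositive and that applied to $\alpha$ nonnegative; this reduces to checking that the $\mu^{n+1}\gamma$ term dominates the $O(\mu^{n+1})$ residual of $U_{n+1}$, which fixes a lower bound on $\gamma$ through the sign of $W^{(\mp)}$. The boundary conditions at $z=0,a$ and the initial condition hold by Condition \ref{C1} together with $\gamma\ge 0$, and the ordering $\alpha\le\beta$ holds for $\mu$ sufficiently small since the two branches differ by $2\mu^{n+1}\gamma>0$ up to the lower-order front shift. It then remains to confront the transition layer.

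The crux of the argument --- and the step I expect to be the main obstacle --- is the \emph{sewing inequality for the normal derivative at the internal layer}. For $\beta$ to be a legitimate upper solution the jump $\big[\partial_n\beta\big]$ across its front must carry the correct sign, and symmetrically for $\alpha$; in terms of the matching function $H$ introduced before \eqref{equationH0}, this is the requirement that the shift $\delta$ can be chosen to make the leading jump one-signed. That this is possible rests on $\partial H_0/\partial h$ having a definite sign on the exact front $h_0$, a monotonicity that follows from the explicit phase-trajectory formula \eqref{sec1.PhiExp} together with the solvability structure of Condition \ref{C4} --- the same positivity that renders \eqref{sec5.matchC2} solvable through the coefficient in front of $\partial_t h_1$. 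Once the one-signed jump is secured, the pair $(\alpha,\beta)$ is admissible, the comparison theorem furnishes a solution $u$ with $\alpha\le u\le\beta$ in the stated regularity class, and since $\beta-\alpha=O(\mu^{n+1})$ while both barriers equal $U_n+O(\mu^{n+1})$, the estimate $\abs{u-U_n}\le C\mu^{n+1}$ follows at once.
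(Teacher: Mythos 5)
Your proposal follows essentially the same route as the paper: the asymptotic method of differential inequalities, with upper and lower solutions built from $U_{n+1}$ by shifting the transition surface by $\pm\mu^{n+1}\delta$ and perturbing the amplitude by $\pm\mu^{n+1}$ times a barrier term (the paper writes this as $\delta^{(\mp)}+q^{(\mp)}(\xi,t)$, separating a regular and a layer-type correction where you use a single $\gamma$), followed by verification of the three admissibility conditions --- the differential inequalities, the boundary/initial ordering, and the one-signed jump of the normal derivative at the shifted fronts --- and an appeal to the comparison principle. Your identification of the sewing inequality as the crux, controlled by the sign structure behind Condition \ref{C4} and the solvability of \eqref{sec5.matchC2}, matches the role the paper assigns to Condition \ref{LowUpCond3} and its choice of $\delta_h$.
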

\begin{proof}
    The proof is based on the classical method of differential inequalities, as detailed in \cite{Nef1995,AntLevNef2014,ChaZha2022}. This approach dictates that a solution for the initial problem \eqref{sec1.InitProb} exists if there are upper and lower solutions for the same problem \eqref{sec1.InitProb}, and they satisfy the following properties: 
    \begin{condition}\label{LowUpCond1}
        The inequalities
        \[
            \displaystyle L[\beta] \leqslant 0 \leqslant L[\alpha], \\
        \]     
        \[     
             \alpha \leqslant \beta, \quad \text{for all }(x,y,z,t) \in \bar{D} \times [0,T],
        \]
        \begin{align*}
            &\alpha(x,y,z,t,\mu) = \alpha(x+L,y,z,t,\mu), \quad \alpha(x,y,z,t,\mu) = \alpha(x,y+M,z,t,\mu), \\
            &\beta(x,y,z,t,\mu) = \beta(x+L,y,z,t,\mu), \quad \beta(x,y,z,t,\mu) = \beta(x,y+M,z,t,\mu)
        \end{align*}
        are satisfied for smooth functions $\alpha(x,y,z,t,\mu)$ and $\beta(x,y,z,t,\mu)$
        %, which are $L$-periodic by $x$ and $M$-periodic by $y$, 
        with differential operators
        \[
            \displaystyle L[\gamma] := \mu\Delta \gamma - \pdv{\gamma}{t} = A\pdv{\gamma}{x} + B\pdv{\gamma}{y} - \gamma \pdv{\gamma}{z} + F(\gamma,x,y,z), \quad \gamma = \alpha, \beta.
        \]
    \end{condition}
    \begin{condition}\label{LowUpCond2}
        The relations of the upper and lower solutions to the boundary and initial conditions for any $(x,y,z) \in \bar{D}, \mu \in (0, \mu_0]$:
        \begin{align*}
             &\alpha(x,y,0,t,\mu) \leqslant u^{0}(x,y) \leqslant \beta(x,y,0,t,\mu), \\ 
             &\alpha(x,y,a,t,\mu) \leqslant u^{a}(x,y) \leqslant \beta(x,y,a,t,\mu), \\ 
             &\alpha(x,y,z,0,\mu) \leqslant u_{init}(x,y,z,\mu) \leqslant \beta(x,y,z,0,\mu),
        \end{align*}
    \end{condition}
    \begin{condition}\label{LowUpCond3}
         Regarding the derivative discontinuities for lower and upper solutions along the normals to the curves $h_{\alpha, \beta}$, it is established that:
        \begin{align*}
             &\pdv{\alpha}{n} \qty(x, y, h_{\alpha}(x,y,t)+0, t, \mu) - \pdv{\alpha}{n} \qty(x, y, h_{\alpha}(x,y,t)-0, t, \mu) \geqslant 0, \\
             &\pdv{\beta}{n} \qty(x, y, h_{\beta}(x,y,t)+0, t, \mu) - \pdv{\beta}{n} \qty(x, y, h_{\beta}(x,y,t)-0, t, \mu) \geqslant 0, 
        \end{align*}
        where $h_{\alpha}(x,y,t)$ and $h_{\beta}(x,y,t)$ are the curves where the upper and lower solutions are not smooth.
    \end{condition}

    A detailed examination of these conditions, as well as the construction of upper and lower solutions, is carried out similarly to the method presented in \cite{AntVolLevNef2017}. The transition functions need to be redefined as follows:
    \begin{align*} 
        h_{\alpha}(x,y,t) = \sum_{i=0}^{n} \mu^{i} h_i(x,y,t) + \mu^{n+1}\delta_{h}(x,y,t), \\
        h_{\beta}(x,y,t) = \sum_{i=0}^{n} \mu^{i} h_i(x,y,t) - \mu^{n+1}\delta_{h}(x,y,t),
    \end{align*}
    where $\delta_{h}(x,y,t)$ is chosen according to Condition \ref{LowUpCond3}. Subsequently, $\alpha(x,y,$ $z,t,\mu)$ and $\beta(x,y,z,t,\mu)$ can be constructed by modifications of the asymptotic series \eqref{sec3.serU} as follows:
    \begin{align}\label{lowerSolExp}
        \alpha^{(\mp)} = U_{n+1}^{(\mp)}\Big|_{\xi_{\alpha}} - \mu^{n+1} \qty(\delta^{(\mp)} + q^{(\mp)}(\xi_\alpha,t)), \qquad (x,y,z,t) \in \bar{D}_{\alpha} \times [0,T],\\
        \beta^{(\mp)} = U_{n+1}^{(\mp)}\Big|_{\xi_{\beta}} + \mu^{n+1} \qty(\delta^{(\mp)} + q^{(\mp)}(\xi_\beta,t)), \qquad (x,y,z,t) \in \bar{D}_{\beta} \times [0,T].
    \end{align}
     Here, $\bar{D}_{\alpha}$ and $\bar{D}_{\beta}$ are defined similarly to $\bar{D}$, but specifically for the discontinuous curves $h_{\alpha}$ and $h_{\beta}$. Using the sewing technique, it is possible to establish all coefficients of the representations of the upper and lower functions. More details about their construction are presented for the two-dimensional case in \cite{AntVolLevNef2017}.
\end{proof}
Therefore, we can determine all elements of \eqref{sec1.regSer} and \eqref{sec1.trLayerSer} up to the $k$-th order, which depends on our desired accuracy.

%============================================================================
\section{A Numerical example}\label{secNumExample}

We analyze a three-dimensional reaction-diffusion-advection differential equation

\begin{equation} \label{forwardexample1}
    \displaystyle \mu\Delta u -  \frac{\partial u}{\partial t} =  \sin(\pi x)  \frac{\partial u}{\partial x} + \cos(\frac{\pi x}{4})  \frac{\partial u}{\partial y}-u \frac{\partial u}{\partial z}  +f(x,y,z),
\end{equation}
where $x, y, z \in \Omega = [-1,1] \times [-1,1]\times [0,1]$, $t \in [0,0.85]$, $\mu = 0.01$, and the source function $f(x,y,z)=\cos{ ( \frac{\pi x}{4}  )} \cos{( \frac{\pi y}{4}  )} \cos{( \frac{\pi z}{4}  )}$. For this particular case, we employ boundary conditions $u(x,y,0,t)=-6, \, u(x,y,1,t)=4, \, u(-1,y,z,t) = u(1,y,z,t), \, u(x,-1,z,t) = u(x,1,z,t)$ and initial condition $\displaystyle  u(x,y,z,0)=u_{init}(x,y,z,\mu )$. Employing the method of asymptotic expansions, the original equation is decomposed into two sub-problems to identify the outer functions:

\begin{equation*}
    \displaystyle  \sin(\pi x)   \frac{\partial \varphi^{(\mp)} }{\partial x} + \cos(\frac{\pi x}{4})  \frac{\partial \varphi^{(\mp)} }{\partial y}-\varphi^{(\mp)}  \frac{\partial \varphi^{(\mp)}}{\partial z}  = \cos{ ( \frac{\pi x}{4}  )} \cos{( \frac{\pi y}{4}  )} \cos{( \frac{\pi z}{4}  )}
\end{equation*}
with boundary conditions 
\[
    \varphi^{(-)}(x,y,0)=-6, \quad \varphi^{(+)}(x,y,1)=4,
\]
and periodic conditions
\[
     \varphi^{(\mp)}(-1,y,z) = \varphi^{(\mp)}(1,y,z), \, \varphi^{(\mp)}(x,-1,z) = \varphi^{(\mp)}(x,1,z).
\]
The leading term of the propagating front $h_0(x,y,t)$ could be established as 
\begin{equation}\label{eq44}
\begin{cases}
    \displaystyle {h_0}_{t} +\sin(\pi x){h_0}_{x}+\cos(\frac{\pi x}{4}){h_0}_{y} =-\frac{1}{2}  \left( \varphi^{(-)} +\varphi^{(+)} \right),\\
    h_0(x,y,0)=h_0^{*}=0, \\ 
    h_0(-1,y,0)=h_0(1,y,0), \quad h_0(x,-1,0)=h_0(x,1,0).
\end{cases}
\end{equation}

\begin{figure}[ht] 
\begin{center}
\includegraphics[width=0.4\linewidth,height=0.4\textwidth,keepaspectratio]{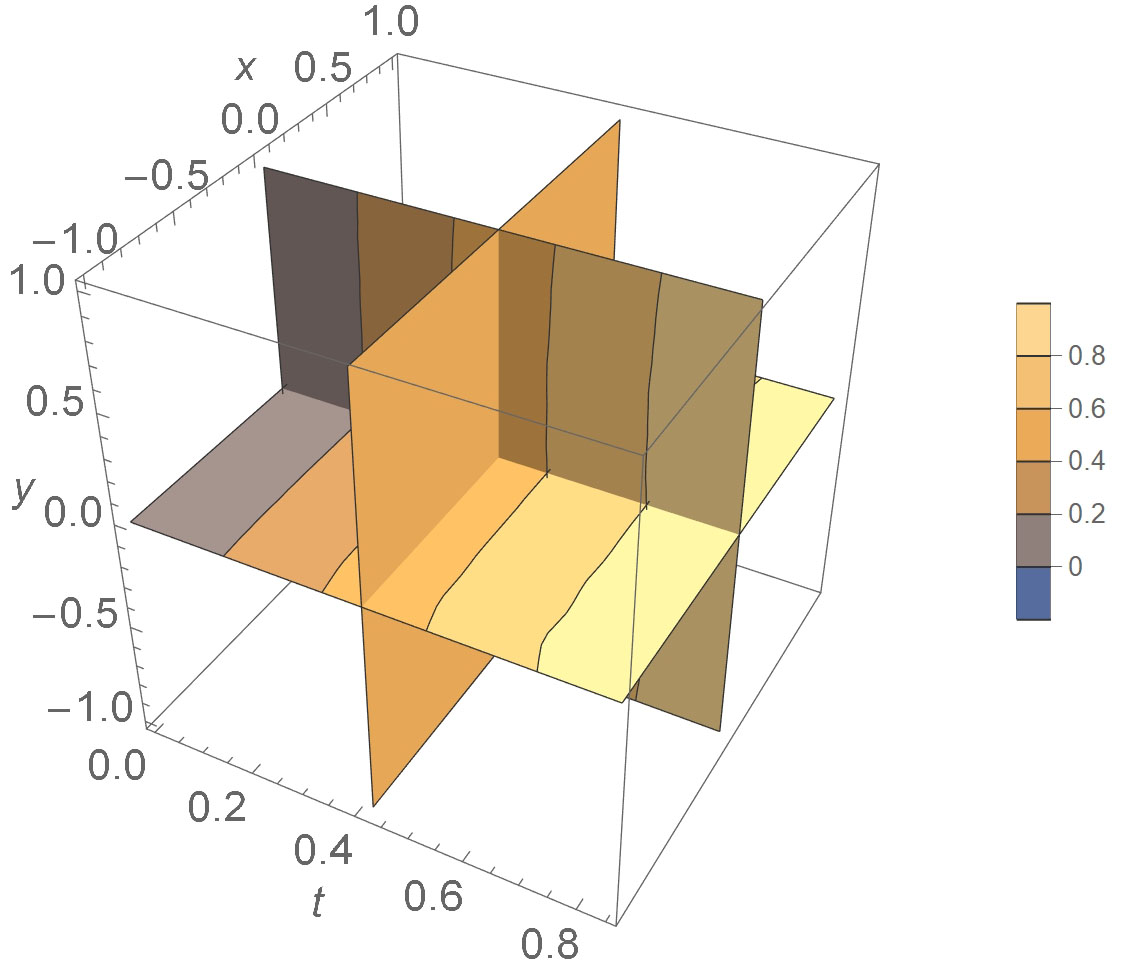}
\caption{Solution of \eqref{eq44} (obtained numerically).}
\label{fig:x0example1}
\end{center}
\end{figure}

The analysis of the equation \eqref{eq44}, illustrated in Fig. \ref{fig:x0example1}, reveals the presence of a transition layer confined to the domain $0 \leq h_0(x,y,t) \leq 1$ across all values of $x$ and $y$ within the interval $[-1,1]$ and for time $t$ ranging from 0 to 0.85. This finding provides numerical validation for the Assumption \ref{C4}

To solve the equation \eqref{forwardexample1} numerically, it is important to establish an initial condition, which can be formulated as follows:
\begin{equation*}
    \displaystyle u_{init} =\frac{u^{a}}{2}\qty(1 + \Theta) + \frac{u^{-a}}{2} \qty(1 - \Theta).
\end{equation*}
where $\Theta = \tanh(x+y+\frac{z-h_0(x,y,0)}{0.1\mu})$. In this example, it is reduced to
\begin{equation*}
    \displaystyle u_{init} = 5\tanh\left(x+y+\frac{z}{0.001}\right)-1,
\end{equation*}
setting the initial location of the transition layer at $h_0(x,y,0)=0$.

Our hypotheses are confirmed through this study, showing that equation \eqref{forwardexample1} admits an autowave solution featuring a moving transitional layer close to $h_0(x,y,t)$. In the leading-order approximation, the solution is expressed as:

\begin{align} \label{asymptoticsolEXAMPLE1}
U_{0}=
\begin{cases}
\displaystyle \varphi^{(-)}(x,y,z)  -\frac{\varphi^{(-)}(x,y,h_0)-\varphi^{(+)}(x,y,h_0)}{\exp \left(-\Phi_{*}(h_0)  \right)+1} ,  \quad z \in [0,h_0], \\
\displaystyle \varphi^{(+)}(x,y,z)+\frac{\varphi^{(-)}(x,y,h_0)-\varphi^{(+)}(x,y,h_0)}{\exp \left(\Phi_{*}(h_0) \right)+1} , \quad z \in [h_0,1].
\end{cases}
\end{align}
Here, the term $\Phi_{*}(h_0)$ is defined as:
\[
\Phi_{*}(h_0) = \frac{ \left(h_0-z   \right) \left(\varphi^{(-)}(x,y,h_0)-\varphi^{(+)}(x,y,h_0) \right) \left(1-{h_0}_{x}-{h_0}_{y} \right)}{2 \mu}
\]

To validate our methodology, we correspond the asymptotic solution \eqref{asymptoticsolEXAMPLE1} with a numerical solution at two specific instances, $t_1 = 0.2$ and $t_2 = 0.6$, considering the numerical solution as the benchmark. These comparisons are showcased in  Figures \ref{fig:asymptSOL} and \ref{fig:numSOL} for the asymptotic solution and Figures \ref{fig:asymptSOL2} and \ref{fig:numSOL2} for the numerical solution, at times $t_1 = 0.2$ and $t_2 = 0.6$, respectively.

The effectiveness of the asymptotic solution is evidenced by a small relative error. Specifically, the relative error at $t_1 = 0.2$ is computed as:
$$ 
    \frac{\| U_0(x,y,z,t_1) - u(x,y,z,t_1) \|_{L^{2}(\Omega)}}{\|u(x,y,z,t_1) \|_{L^{2}(\Omega)}} = 0.179. 
$$

Similarly, at $t_2 = 0.6$, the value of the relative error is:
$$
    \frac{\| U_0(x,y,z,t_2) - u(x,y,z,t_2) \|_{L^{2}(\Omega)}}{\|u(x,y,z,t_2) \|_{L^{2}(\Omega)}} = 0.194. 
$$

\begin{figure}[ht!]
 \centering
\subfigure[]{
\includegraphics[width=0.4\linewidth]{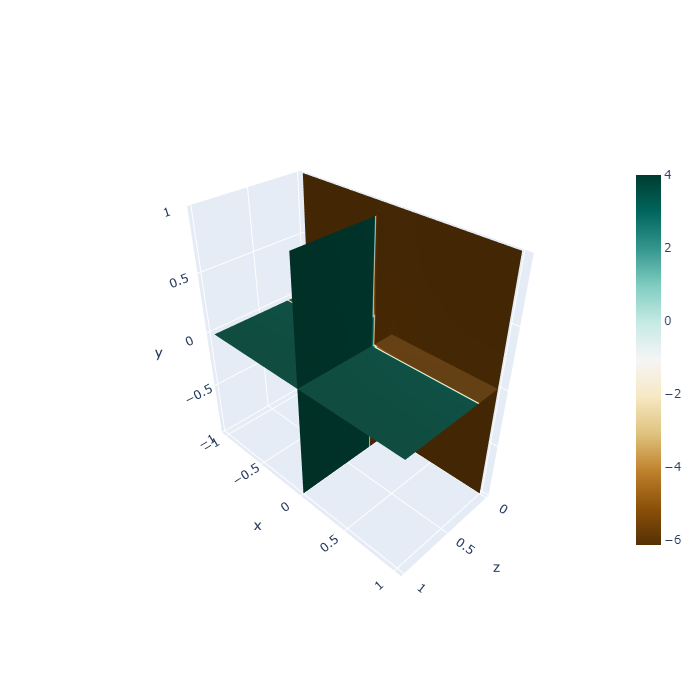} \label{fig:asymptSOL} }
\hspace{0ex}
\subfigure[]{
\includegraphics[width=0.4\linewidth]{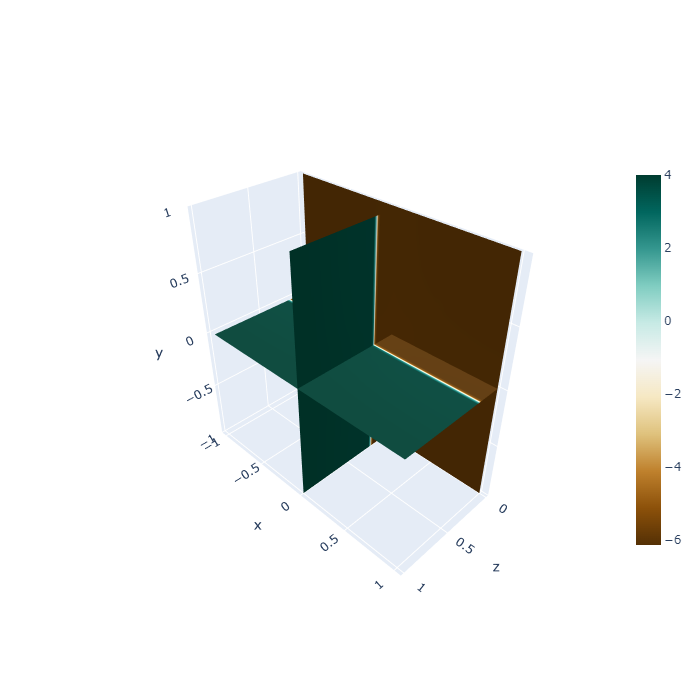} \label{fig:numSOL} }
\caption{Comparison of the asymptotic solution \subref{fig:asymptSOL}   and numerical solution \subref{fig:numSOL} at the moment  $t_1=0.2$. } \label{solutionsexample1}
\end{figure}

\begin{figure}[H]
 \centering
\subfigure[]{
\includegraphics[width=0.4\linewidth]{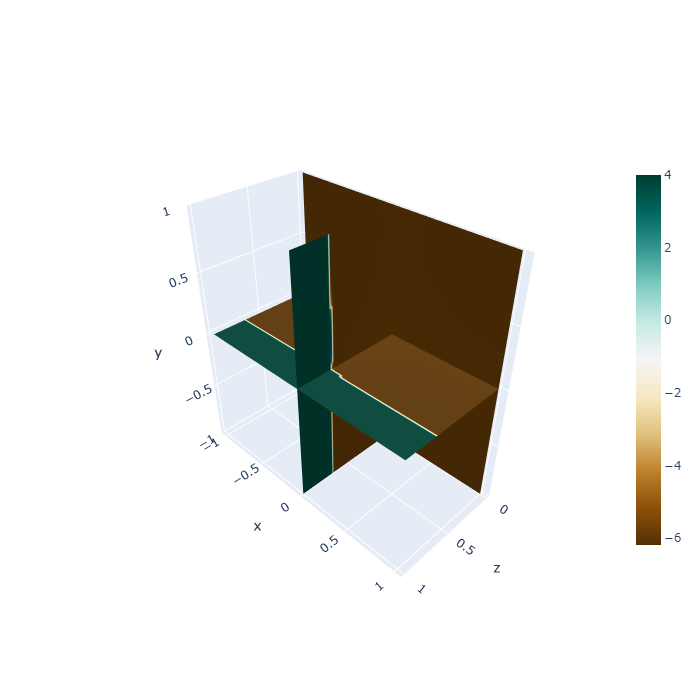} \label{fig:asymptSOL2} }
\hspace{0ex}
\subfigure[]{
\includegraphics[width=0.4\linewidth]{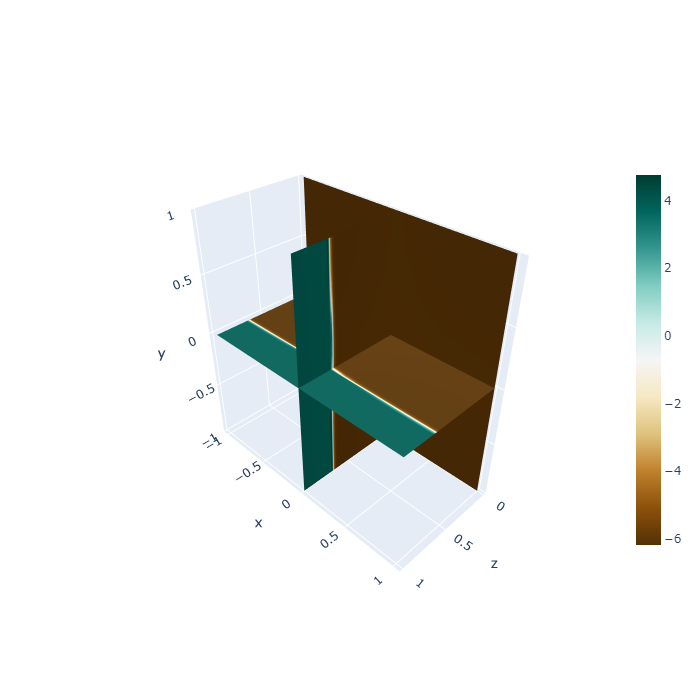} \label{fig:numSOL2} }
\caption{Comparison of the asymptotic solution \subref{fig:asymptSOL2}   and numerical solution \subref{fig:numSOL2} at the moment $t_2=0.6$. } \label{solutionsexample2}
\end{figure}

%% The Appendices part is started with the command \appendix;
%% appendix sections are then done as normal sections
%% \appendix

%% \section{}
%% \label{}

%% If you have bibdatabase file and want bibtex to generate the
%% bibitems, please use
%%
%%  \bibliographystyle{elsarticle-num} 
%%  \bibliography{<your bibdatabase>}

%% else use the following coding to input the bibitems directly in the
%% TeX file.
\section*{Data availability}
No data was used for the research described in the article.

\section*{Funding}
This work was supported by the National Natural Science Foundation of China (Grants 12350410359 \& 12171036), the National Key Research and Development Program of China (Grant 2022YFC3310300) and the Beijing Natural Science Foundation (Grant Z210001).

\section*{Conflict of interest}
The authors of this work declare that they have no conflict of interest.
\smallskip
\begin{center}{REFERENCES}\end{center}{\small
\bibliographystyle{plain} % We choose the "plain" reference style
\bibliography{reference} 
}

%\end{thebibliography}

\section{Appendix: The well-posedness of conditions \eqref{C2} and \eqref{C4}.}
\label{Appendix}

In this appendix, by using standard arguments of partial differential equations (cf. \cite{Eva2010}), we prove the existence of solutions for partial differential equations \eqref{sec1.DegEq} and \eqref{sec1.TransFuncEq} under some mild assumptions. For this purpose, we consider the general first-order quasi-linear PDE:
\begin{equation}\label{sec8.Gen3DQuasiEq}
    a_1(x, y, z, u) u_{x} + a_2(x, y, z, u) u_{y} + a_3(x, y, z, u) u_{z} = f(x, y, z, u).
\end{equation}

The corresponding system of ordinary differential equations (ODEs) for characteristic curves reads:
\begin{equation}\label{sec8.CharODE}
\left\{\begin{array}{l}
	\displaystyle \frac{d x}{d t}=a_1(x, y, z, w), \\
	\displaystyle \frac{d y}{d t}=a_2(x, y, z, w), \\
	\displaystyle \frac{d z}{d t}=a_3(x, y, z, w), \\
	\displaystyle \frac{d w}{d t}=f(x, y, z, w).
\end{array}\right.
\end{equation}

\begin{theorem}\label{characteristictheorem}
(\cite[Theorem 3.5.3]{debnath1997nonlinear},\cite[Section 3]{Eva2010}) 
    Suppose that $x_0(s1,s2)$, $y_0(s_1,s_2)$, $z_0(s_1,s_2)$ and $u_0(s_1,s_2)$ are continuous differentiable functions of $s_1$, $s_2$ in a closed interval $s_1, s_2 \in [0,1]$ and that $a_1$, $a_2$, $a_3$ and $f$ are functions of $x$, $y$, $z$ and $u$ with continuous first order partial derivatives with respect to their arguments in some domain $D$ of $(x,y,z,w)$-space containing the initial curve
    \begin{equation}\label{sec8.CharInitData}
        \Gamma : x=x_0(s_1, s_2), \quad y=y_0(s_1,s_2), \quad z=z_0(s_1,s_2), \quad u=u_0(s_1,s_2),
    \end{equation}
    where $s_1 \in [0,1]$, $s_2 \in [0,1]$, and satisfying the condition
    \begin{equation}\label{sec8.CharCond}
        J=\left.\operatorname{det}\left(\begin{array}{ccc}
	\displaystyle  a_1 &\displaystyle  a_2 &\displaystyle  a_3 \\
	\displaystyle \frac{\partial x_0}{\partial s_1} & \displaystyle            \frac{\partial y_0}{\partial s_1} & \displaystyle \frac{\partial z_0}{\partial s_1} \\
	\displaystyle \frac{\partial x_0}{\partial s_2} & \displaystyle    \frac{\partial y_0}{\partial s_2} & \displaystyle \frac{\partial z_0}{\partial s_2}
    \end{array}\right)\right|_\Gamma \neq 0,
    \end{equation}
    Then, there exists a unique solution $u = u(x,y,z)$ of \eqref{sec8.Gen3DQuasiEq} in the neighbourhood of $C : x=x_0(s_1,s_2), y=y_0(s_1,s_2), z=z_0(s_1,s_2), s_1, s_2 \in [0,1]$, and the solution satisfies the initial condition $u_0(s_1,s_2) = u(x_0, y_0, z_0)$. 
\end{theorem}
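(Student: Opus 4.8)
The plan is to prove this by the classical method of characteristics, reducing the quasi-linear PDE \eqref{sec8.Gen3DQuasiEq} to the characteristic ODE system \eqref{sec8.CharODE} and then inverting the characteristic flow. First I would solve the system \eqref{sec8.CharODE} subject to the initial data \eqref{sec8.CharInitData} imposed at $t=0$. Since $a_1, a_2, a_3, f$ are assumed to have continuous first-order partial derivatives, they are locally Lipschitz, so the Picard--Lindel\"of theorem guarantees, for each fixed parameter pair $(s_1,s_2)\in[0,1]^2$, a unique solution on some interval $t\in(-\varepsilon,\varepsilon)$. By the standard theorem on smooth dependence of ODE solutions on initial conditions and parameters, the resulting characteristic map
\[
(t,s_1,s_2)\longmapsto \bigl(X(t,s_1,s_2),\,Y(t,s_1,s_2),\,Z(t,s_1,s_2),\,W(t,s_1,s_2)\bigr)
\]
is continuously differentiable on a neighborhood of $\{t=0\}\times[0,1]^2$.

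The key step is to invert the spatial part of this map. I would compute the Jacobian of $(t,s_1,s_2)\mapsto(X,Y,Z)$ at $t=0$: since $X_t=a_1$, $Y_t=a_2$, $Z_t=a_3$ while $X_{s_i}=\partial_{s_i}x_0$, $Y_{s_i}=\partial_{s_i}y_0$, $Z_{s_i}=\partial_{s_i}z_0$ there, this Jacobian determinant equals precisely the determinant $J$ appearing in the non-characteristic condition \eqref{sec8.CharCond} (up to transposition, which leaves the determinant unchanged). Because $J\neq0$ on $\Gamma$, the Inverse Function Theorem provides $C^1$ inverse functions $t=T(x,y,z)$, $s_1=S_1(x,y,z)$, $s_2=S_2(x,y,z)$ in a neighborhood of the initial surface. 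I then define the candidate solution
\[
u(x,y,z):=W\bigl(T(x,y,z),S_1(x,y,z),S_2(x,y,z)\bigr),
\]
which is $C^1$ as a composition of $C^1$ maps and satisfies $u(x_0,y_0,z_0)=u_0$ by evaluating at $t=0$.

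To verify that $u$ solves \eqref{sec8.Gen3DQuasiEq}, I would use that $u(X,Y,Z)\equiv W$ by construction; differentiating this identity in $t$ gives $u_xX_t+u_yY_t+u_zZ_t=W_t$, and substituting $X_t=a_1$, $Y_t=a_2$, $Z_t=a_3$, $W_t=f$ (with the arguments $x=X,\,y=Y,\,z=Z,\,u=W$) yields the PDE exactly. For uniqueness, given any $C^1$ solution $\tilde u$, I would run the reduced characteristic equations $\dot x=a_1(x,y,z,\tilde u)$, $\dot y=a_2(x,y,z,\tilde u)$, $\dot z=a_3(x,y,z,\tilde u)$ and observe that $w(t):=\tilde u(x(t),y(t),z(t))$ obeys $\dot w=f(x,y,z,w)$ on account of $\tilde u$ solving the PDE; hence $(x,y,z,w)$ satisfies the full system \eqref{sec8.CharODE} with the same initial data, so ODE uniqueness forces $\tilde u=u$ along every characteristic, i.e. throughout the neighborhood.

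The main obstacle is the bookkeeping in the second step: correctly identifying the flow-map Jacobian at $t=0$ with the determinant $J$ of \eqref{sec8.CharCond}, and ensuring the Inverse Function Theorem is applied on a neighborhood uniform in $(s_1,s_2)$ over the compact square $[0,1]^2$, so that the solution is defined on a genuine neighborhood of the entire initial surface rather than merely pointwise in the parameters. Everything else reduces to routine applications of standard ODE existence, uniqueness, and smooth-dependence results.
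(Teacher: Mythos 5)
Your proposal follows essentially the same route as the paper's own proof sketch: solve the characteristic system \eqref{sec8.CharODE} with data \eqref{sec8.CharInitData}, use the non-degeneracy condition \eqref{sec8.CharCond} to invert the flow map via the Inverse Function Theorem, and substitute back to obtain $u(x,y,z)$. Your write-up is in fact somewhat more complete than the paper's overview, since you also verify that the constructed function satisfies \eqref{sec8.Gen3DQuasiEq} by differentiating along characteristics and supply the uniqueness argument, both of which the paper delegates to the cited references.
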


Short overview of the proof can be described in the following steps. First, it requires to solve the equation \eqref{sec8.CharODE} with $t=0$ and initial data \eqref{sec8.CharInitData}, which leads to unique solution
\begin{equation}
    x = X(s_1,s_2,t), y = Y(s_1,s_2,t), z = Z(s_1,s_2,t), w = W(s_1,s_2,t), 
\end{equation}
where $s_1, s_2 \in [0,1], \quad t\in [0,T]$ ($T$ is some positive constant), and solutions are satisfying equality
\[
    (X(s_1,s_2,0), Y(s_1,s_2,0), Z(s_1,s_2,0)) = (x_0(s_1,s_2), y_0(s_1,s_2), z_0(s_1,s_2)).
\]
Next, Condition \eqref{sec8.CharCond} allows us to define $(s_1,s_2,t)$ in terms of $(x,y,z)$ as follows
\[
    s_1 = \phi_1(x,y,z), \quad s_2 = \phi_2(x,y,z), \quad t = \psi(x,y,z).
\]
Finally, if we substitute them into $z = Z(s_1,s_2,t)$, then we obtain 
\[
    u = Z(s_1,s_2,t) = Z(\phi_1(x,y,z), \phi_2(x,y,z), \psi(x,y,z)) \equiv u(x,y,z),
\]
which is solution for the problem \eqref{sec8.Gen3DQuasiEq}. More details about solving nonlinear and quasi-linear problems with usage of characteristics can be found in \cite{Eva2010}. 
From the above analysis, we can establish two theorems regarding the existence of solutions for the problems defined in equations \eqref{sec1.DegEq} and \eqref{sec1.TransFuncEq}.

\begin{theorem}\label{Th.DegSolExist}
Suppose that $A(x,y,z)$, $B(x,y,z)$ and $F(u,x,y,z)$ are functions of $x$, $y$, $z$, and $u$ with continuous first-order partial derivatives with respect to their arguments in the domain $\bar{D}$ containing the initial curves
\[
    \Gamma^0 : x=s_1, \quad y=s_2, \quad z=0, \quad u=u^{0}(s_1,s_2),
\]
and
\[
    \Gamma^a : x=s_1, \quad y=s_2, \quad z=a, \quad u=u^{a}(s_1,s_2),
\]
If functions $u^{0}(s_1,s_2)$ and $u^{a}(s_1,s_2)$ are continuously differentiable functions of $s_1$ and $s_2$ in a closed interval $s_1, s_2 \in [0,1]$, and the inequalities
\[
    u^{0}(s_1,s_2) \neq 0, \qquad u^{a}(s_1,s_2) \neq 0
\]
are satisfied, then problem \eqref{sec1.DegEq} has in the domain $\bar{D}$ a unique solution $u=\varphi^{(-)}(x,y,z)$ for the initial curve $\Gamma^0$ and a solution $u=\varphi^{(+)}(x,y,z)$ for the initial curve $\Gamma^a$.
\end{theorem}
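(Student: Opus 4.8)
The plan is to exhibit the degenerate equation \eqref{sec1.DegEq} as an instance of the general quasi-linear PDE \eqref{sec8.Gen3DQuasiEq} and then apply Theorem \ref{characteristictheorem} verbatim. Rewriting \eqref{sec1.DegEq} as $A\,u_x + B\,u_y - u\,u_z = -F$, I would make the identification $a_1 = A(x,y,z)$, $a_2 = B(x,y,z)$, $a_3 = -u$, and $f = -F(u,x,y,z)$. The regularity hypotheses of Theorem \ref{characteristictheorem} are then immediate: $a_1$, $a_2$, and $f$ inherit continuous first-order partial derivatives from $A$, $B$, and $F$ by assumption, while $a_3 = -u$ is smooth in its argument.

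The decisive step is checking the transversality condition \eqref{sec8.CharCond} on each initial curve. For $\Gamma^0$ the parametrization $x_0 = s_1$, $y_0 = s_2$, $z_0 = 0$, $u_0 = u^{0}(s_1,s_2)$ gives $\partial_{s_1}(x_0,y_0,z_0) = (1,0,0)$ and $\partial_{s_2}(x_0,y_0,z_0) = (0,1,0)$, so expanding the determinant along its last column collapses it to the single entry $a_3$:
\[
  J\big|_{\Gamma^0} = \det\begin{pmatrix} A & B & -u \\ 1 & 0 & 0 \\ 0 & 1 & 0 \end{pmatrix}\bigg|_{\Gamma^0} = -u^{0}(s_1,s_2).
\]
Hence the assumption $u^{0} \neq 0$ is exactly the non-characteristic condition $J \neq 0$, and the identical computation on $\Gamma^a$ produces $J|_{\Gamma^a} = -u^{a}(s_1,s_2)$, so that $u^{a} \neq 0$ is precisely what is required there. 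With all hypotheses met, Theorem \ref{characteristictheorem} delivers a unique solution $\varphi^{(-)}$ near $\Gamma^0$ with $\varphi^{(-)}(x,y,0) = u^{0}$ and a unique $\varphi^{(+)}$ near $\Gamma^a$ with $\varphi^{(+)}(x,y,a) = u^{a}$.

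The main obstacle is upgrading this local statement to a solution defined on the whole slab $\bar{D}$. This requires the characteristic map $(s_1,s_2,t) \mapsto (x,y,z)$ to remain a diffeomorphism for all relevant $t$, i.e. its Jacobian must not vanish as $t$ grows away from the value it takes on the initial curve. Here the sign information is what saves the construction: along characteristics one has $\dv{z}{t} = a_3 = -w$, so if $u$ keeps a definite sign then $z$ is monotone along each characteristic, and the characteristics issuing from $z=0$ sweep upward into the slab while those from $z=a$ sweep downward, without folding in the $z$-direction. The arrangement $\varphi^{(-)} < 0 < \varphi^{(+)}$ anticipated in Condition \ref{C2} is exactly the hypothesis that guarantees this, so I would verify that $u$ stays away from zero along each family and thereby continue the local solutions across $\bar{D}$. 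Finally, the $L$-periodicity in $x$ and $M$-periodicity in $y$ of $\varphi^{(\mp)}$ would follow from the corresponding periodicity of $A$, $B$, $F$ and of the data together with the uniqueness already obtained, since any shifted solution solves the same problem and must coincide with the original.
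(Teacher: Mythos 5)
Your proposal follows essentially the same route as the paper: cast the degenerate equation as an instance of the general quasi-linear PDE, verify the non-characteristic condition by computing the Jacobian $J$ on $\Gamma^0$ and $\Gamma^a$ (your $J=-u^{0}$ versus the paper's $J=u^{0}$ is only a harmless sign-convention difference in the choice of $a_3$), and invoke Theorem \ref{characteristictheorem}. You are in fact more careful than the paper on one point: the cited theorem is local, and your discussion of why the characteristics do not fold in $z$ (monotonicity of $z$ along characteristics when $u$ keeps a definite sign) and of periodicity-by-uniqueness addresses the extension to all of $\bar{D}$, which the paper's proof asserts without argument.
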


\begin{proof}
In the case of the problem, we let $a_1(x_1, x_2, x_3, u) = A(x,y,z)$, $a_2(x_1,$ $  x_2, x_3, u) = B(x,y,z)$, $a_3(x_1, x_2, x_3, u) = u$, $c(x_1, x_2, x_3, u) = -F(u,x,y,z)$. Condition \ref{C2} implies that we need two solutions, so we use two initial curves $\Gamma^0$ and $\Gamma^a$. The characteristic conditions for $\Gamma^{0}$ are:
\[
    J=\left.\operatorname{det}\left(\begin{array}{ccc}
    A & B & u \\
    1 & 0 & 0 \\
    0 & 1 & 0
\end{array}\right)\right|_{\Gamma^{0}} = u^{0}(s_1,s_2) \neq 0,
\]
and for $\Gamma^{a}$:
\[
    J=\left.\operatorname{det}\left(\begin{array}{ccc}
    A & B & u \\
    1 & 0 & 0 \\
    0 & 1 & 0
\end{array}\right)\right|_{\Gamma^{a}} = u^{a}(s_1,s_2) \neq 0,
\]
If the above inequalities are satisfied, then we can apply Theorem \ref{characteristictheorem} to prove the existence of the solution in the domain $\bar{D}$.
\end{proof}

\begin{theorem}
Suppose that $A(x,y,h)$, $B(x,y,h)$, $\varphi^{(-)}(x,y,h)$, and $\varphi^{(+)}(x,$ $ y,h)$ are functions of $x$, $y$, and $h$ with continuous first-order partial derivatives with respect to their arguments in the domain $\{ x\in\mathbb{R}, y\in\mathbb{R}, t\in [0,T] \}$ containing the initial curve
\[
    \Gamma : x=s_1, \quad y=s_2, \quad t=0, \quad h=h_{init}(s_1,s_2),
\]
If the function $h_{\text{init}}(s_1,s_2)$ is a continuously differentiable function of $s_1$ and $s_2$ on the closed interval $s_1, s_2 \in [0,1]$, then the problem \eqref{sec1.TransFuncEq} has a unique solution $h=h_0(x,y,t)$ in the domain $\{ x\in\mathbb{R}, y\in\mathbb{R}, t\in [0,T] \}$ for the initial curve $\Gamma$.
\end{theorem}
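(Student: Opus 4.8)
The plan is to treat \eqref{sec1.TransFuncEq} as a first-order quasi-linear Cauchy problem in the three independent variables $(x,y,t)$, with $t$ playing the role of the third variable $z$ in the template equation \eqref{sec8.Gen3DQuasiEq}, and then to invoke Theorem \ref{characteristictheorem} exactly as in the proof of Theorem \ref{Th.DegSolExist}. Writing the unknown as $h$ in place of $u$, I would match the coefficients by setting
\[
a_1 = A(x,y,h), \quad a_2 = B(x,y,h), \quad a_3 = 1, \quad f = -\tfrac{1}{2}\bigl(\varphi^{(-)}(x,y,h)+\varphi^{(+)}(x,y,h)\bigr),
\]
so that the evolution equation in \eqref{sec1.TransFuncEq} is literally \eqref{sec8.Gen3DQuasiEq} under the substitution $(x,y,z)\mapsto(x,y,t)$, $u\mapsto h$. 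The characteristic system \eqref{sec8.CharODE} then reads $\dot x = A$, $\dot y = B$, $\dot t = 1$, $\dot w = f$; the third equation lets us identify the characteristic parameter with $t$ itself, so characteristics propagate forward from the initial plane $t=0$.

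First I would check the hypotheses of Theorem \ref{characteristictheorem}. The required $C^1$-regularity of $a_1,a_2,a_3,f$ follows from the assumed continuous first-order differentiability of $A$, $B$, and $\varphi^{(\mp)}$, and the initial datum $h_{init}(s_1,s_2)$ is $C^1$ by hypothesis. The transversality condition \eqref{sec8.CharCond} is then evaluated on $\Gamma:\ x=s_1,\ y=s_2,\ t=0$. Because $x_0=s_1$, $y_0=s_2$, $t_0\equiv 0$, the last two rows of the Jacobian matrix reduce to the constant vectors $(1,0,0)$ and $(0,1,0)$, whence
\[
J=\det\begin{pmatrix} A & B & 1 \\ 1 & 0 & 0 \\ 0 & 1 & 0 \end{pmatrix}=1\neq 0.
\]
The key structural point is that $J$ equals the coefficient $a_3=1$ of the $t$-derivative and therefore never vanishes, so $\Gamma$ is noncharacteristic regardless of $A$ and $B$. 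This is the exact analogue of the computation $J=u^0\neq 0$ in Theorem \ref{Th.DegSolExist}, except that here nonvanishing is automatic.

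With transversality secured, Theorem \ref{characteristictheorem} yields a unique $C^1$ solution $h=h_0(x,y,t)$ in a neighborhood of $\Gamma$ satisfying $h_0(x,y,0)=h_{init}(x,y)$. To recover the spatial periodicity built into \eqref{sec1.TransFuncEq}, I would argue by uniqueness: since $A$, $B$, $\varphi^{(\mp)}$, and $h_{init}$ are $L$-periodic in $x$ and $M$-periodic in $y$, the shifted function $h_0(x+L,y,t)$ solves the same Cauchy problem with the same initial datum, so uniqueness forces $h_0(x+L,y,t)=h_0(x,y,t)$, and likewise in $y$. Thus $h_0$ inherits both periodicities.

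The main obstacle is not local solvability—that is immediate from $J\equiv 1$—but rather propagating the solution over the \emph{entire} interval $t\in[0,T]$. Because the problem is genuinely quasi-linear (the coefficients $A(x,y,h)$, $B(x,y,h)$ and the right-hand side depend on $h$), the projected characteristics in the $(x,y)$-plane may in principle focus and force $h_x,h_y$ to blow up at a finite time, producing a gradient shock before $T$ is reached. Closing the argument up to $T$ therefore requires an a priori bound ruling out characteristic crossing; I would obtain this by differentiating the characteristic ODEs to control the Jacobian of the map $(s_1,s_2,t)\mapsto(x,y,t)$ on $[0,T]$, exploiting the global $C^1$ bounds on the data together with their periodicity, which confines the characteristic dynamics to a compact fundamental cell. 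Granting such a bound, the local solution extends uniquely to $\{x\in\R,\ y\in\R,\ t\in[0,T]\}$, completing the proof.
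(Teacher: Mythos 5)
Your proposal is correct and follows essentially the same route as the paper: cast \eqref{sec1.TransFuncEq} as an instance of the quasi-linear template \eqref{sec8.Gen3DQuasiEq}, compute the transversality determinant on $\Gamma$ (which equals $1$ because the coefficient of $h_t$ is $1$), and invoke Theorem \ref{characteristictheorem}; the only cosmetic difference is that the paper orders the variables as $(t,x,y)$ so the determinant appears as an upper-triangular matrix, while you place $t$ in the third slot, and both give $J=1$. Your additional remarks on periodicity-by-uniqueness and, especially, on the need for an a priori bound to extend the local characteristic solution to all of $t\in[0,T]$ address points that the paper's own proof leaves implicit (Theorem \ref{characteristictheorem} is only local), so your version is, if anything, more complete.
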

\begin{proof}
In the case of problem \eqref{sec1.TransFuncEq}, we let $a_1(x_1, x_2, x_3, u) = 1$, $a_2(x_1, x_2, x_3,$ $  u) = A(x,y,h)$, $a_3(x_1, x_2, x_3, u) = B(x,y,h)$, and $c(x_1, x_2, x_3, u) = -\frac{1}{2}(\varphi^{(-)}(x,$ $ y,h) + \varphi^{(+)}(x,y,h))$. The space $(t,x,y)$ is used instead of the $(x,y,z)$-space. The initial curve is denoted by $\Gamma$. 
The characteristic condition for this case can be written as
    \[
        J=\left.\operatorname{det}\left(\begin{array}{ccc}
	  1 & A & B \\
	0 & 1 & 0 \\
	0 & 0 & 1
    \end{array}\right)\right|_\Gamma = 1 \neq 0,
    \]
    Thus, by applying Theorem \ref{characteristictheorem}, we can prove the existence of the solution in the domain $\{ x\in\mathbb{R}, y\in\mathbb{R}, t\in [0,T] \}$.
\end{proof}

\end{document}